\def\Cchi{{\raisebox{.2ex}{ \large $\chi$}}}
\newtheorem{prethm}{{\bf Theorem}}
\newenvironment{thm}{\begin{prethm}{\hspace{-0.5
em}{\bf.}}}{\end{prethm}}
\newtheorem{precor}{{\bf Corollary}}
\newenvironment{cor}{\begin{precor}{\hspace{-0.5
em}{\bf.}}}{\end{precor}}
\newtheorem{preprop}{{\bf Proposition}}
\newenvironment{prop}{\begin{preprop}{\hspace{-0.5
em}{\bf.}}}{\end{preprop}}
\newtheorem{preque}{{\bf Question}}
\newtheorem{preques}{{\bf Question}}
\newtheorem{preremark}{{\bf Remark}}
\newenvironment{remark}{\begin{preremark}{\hspace{-0.5
em}{\bf.}}}{\end{preremark}}
\newtheorem{prelemma}{{\bf Lemma}}
\newtheorem{prelemm}{{\bf Lemma}}
\newtheorem{preex}{{\bf Example}}
\newtheorem{prepro}{{\bf Proposition}}
\newtheorem{preobs}{{\bf Observation}}
\newenvironment{obs}{\begin{preobs}{\hspace{-0.5
em}{\bf.}}}{\end{preobs}}
\newtheorem{preprob}{{\bf Problem}}
\newtheorem{prelem}{{\bf Theorem}}
\newtheorem{preproof}{{\bf Proof.}}
\newenvironment{proof}[1]{\begin{preproof}{\rm
               #1}\hfill{$\Box$}}{\end{preproof}}
\newtheorem{preconj}{{\bf Conjecture}}
\newtheorem{predefi}{{\bf Definition}}
\newenvironment{defi}{\begin{predefi}{\hspace{-0.90
em}{\bf~.}}}{\end{predefi}}
\newtheorem{predeff}{{\bf Definition}}
\date{}
\title{{\large\bf \vspace*{-5mm} The Locating Chromatic Number of the Join of Graphs}}
\author{
{\sc Ali Behtoei\footnote{{\small \it alibehtoei@math.iut.ac.ir}}}\\
{\small \it  Department of Mathematical Sciences}\\
{\small \it  Isfahan University of Technology} \\
{\small \it 84156-83111, \ Isfahan, Iran}}
\begin{document}
\small \voffset=-20mm

\maketitle

\begin{abstract}
 Let $f$ be a proper $k$-coloring of a connected graph $G$  and
$\Pi=(V_1,V_2,\ldots,V_k)$ be an ordered partition of $V(G)$ into
the resulting color classes. For a vertex $v$ of $G$, the color
code of $v$ with respect to $\Pi$ is defined to be the ordered
$k$-tuple $c_{{}_\Pi}(v):=(d(v,V_1),d(v,V_2),\ldots,d(v,V_k)),$
where $d(v,V_i)=\min\{d(v,x)~|~x\in V_i\}, 1\leq i\leq k$. If
distinct vertices have distinct color codes, then $f$ is called a
locating coloring.  The minimum number of colors needed in a
locating coloring of $G$ is the locating chromatic number of $G$,
denoted by  $\Cchi_{{}_L}(G)$. In this paper, we study the locating chromatic number of the join of graphs. We show that when $G_1$ and $G_2$ are two connected graphs with diameter at most two, then $\Cchi_{{}_L}(G_1+G_2)=\Cchi_{{}_L}(G_1)+\Cchi_{{}_L}(G_2)$, where $G_1+G_2$ is the join of $G_1$ and $G_2$. Also, we determine the
locating chromatic numbers of the join of paths, cycles and complete multipartite graphs.
\end{abstract}

\noindent{\bf Keywords: }
Locating coloring, Locating chromatic number, Join.

\section{Introduction}
 Let $G$ be a graph
without loops and multiple edges with vertex set $V(G)$ and edge
set $E(G)$. A proper $k$-coloring of $G$, $k\in \Bbb{N}$,  is a
function $f$ defined from $V(G)$ onto a set of colors
$[k]:=\{1,2,\ldots,k\}$ such that every two adjacent vertices have
different colors. In fact, for every $i$, $1\leq i\leq k$, the set
$f^{-1}(i)$ is a nonempty independent set of vertices which is
called the color class $i$. When $S\subseteq V(G)$, then $f(S):=\{f(u)~|~u\in S\}$. The minimum cardinality $k$ for which
$G$ has a proper $k$-coloring is the chromatic number of $G$,
denoted by $\Cchi(G)$. For a connected graph $G$, the distance
$d(u,v)$ between two vertices $u$ and $v$ in $G$ is the length of
a shortest path between them, and for a subset $S$ of $V(G)$, the
distance between $u$ and $S$ is given by
$d(u,S):=\min\{d(u,x)~|~x\in S\}$. The diameter of $G$ is $\max\{d(u,v)~|~u,v\in V(G)\}$. When $u$ is a vertex of $G$, then the neighbor of $u$ in $G$ is the set $N_G(u):=\{v~|~v\in V(G),~d(u,v)=1\}$.

\begin{defi} {\rm \cite{XL}}
Let $f$ be a proper $k$-coloring of a connected graph $G$ and
$\Pi=(V_1,V_2,\ldots,V_k)$ be an ordered partition of $V(G)$ into
the resulting color classes. For a vertex $v$ of $G$, the {\bf color
code} of $v$ with respect to $\Pi$ is defined to be the ordered
$k$-tuple $$c_{{}_\Pi}(v):=(d(v,V_1),d(v,V_2),\ldots,d(v,V_k)).$$
  If distinct vertices of $G$ have distinct color codes, then $f$ is
called a {\bf locating coloring} of $G$. The {\bf locating chromatic number},
denoted by $\Cchi_{{}_L}(G)$,  is the minimum number of colors in a locating
coloring of $G$.
\end{defi}

The concept of locating coloring was first introduced and studied
by Chartrand et al. in \cite{XL}. They established some bounds for
the locating chromatic number of a connected graph. They also
proved that for a connected graph $G$ with $n\geq 3$ vertices, we
have $\Cchi_{_L}(G)=n$ if and only if $G$ is a complete
multipartite graph. Hence, the locating chromatic number of the
complete graph $K_n$ is $n$. Also for paths and cycles of order
$n\geq 3$ it is proved in \cite{XL} that $\Cchi_{_L}(P_n)=3$,
$\Cchi_{_L}(C_n)=3$ when $n$ is odd, and $\Cchi_{_L}(C_n)=4$ when
$n$ is even. \\The locating chromatic numbers of trees, Kneser
graphs, Cartesian product of graphs, and the amalgamation of stars are studied in \cite{XL},
\cite{Behtoei}, \cite{Behtoei2}, and \cite{Asmiati}, respectively. For more
results in the subject and related subjects, see~\cite{Asmiati}
to \cite{Conditional}.

Obviously, $\Cchi(G)\leq \Cchi_{{}_L}(G)$. Note that the $i$-th
coordinate of the color code of each vertex in the color class
$V_i$ is zero and its other coordinates are non zero. Hence, a
proper coloring is a locating coloring whenever the color codes of vertices
in each color class are different.

Recall that the join of two graphs $G_1$ and $G_2$,
denoted by $G_1+G_2$, is a graph with vertex set $V(G_1)\bigcup V(G_2)$ and edge set $E(G_1)\bigcup E(G_2)\bigcup\{uv~|~u\in V(G_1),~v\in V(G_2)\}$. For example $K_1+P_n$ is the fan $F_n$, $K_1+C_n$ is the wheel $W_n$, and the friendship graph $Fr_n$, $n=2t+1$, is the graph obtained by joining $K_1$ to the $t$ disjoint copies of $K_2$.

In this paper, we study the locating chromatic number of the join of graphs. Although we always have $\Cchi(G_1+G_2)=\Cchi(G_1)+\Cchi(G_2)$, but it may happen that $\Cchi_{{}_L}(G_1+G_2)\neq\Cchi_{{}_L}(G_1)+\Cchi_{{}_L}(G_2)$. For example we have $\Cchi_{{}_L}(P_{10})=3$ while, by Corollary \ref{pnpn} (see Section 3), $\Cchi_{{}_L}(P_{10}+P_{10})=8$.

The diameter of $G_1+G_2$ is at most two. Hence, in each proper coloring of $G_1+G_2$, the color code of no vertex of $G_1+G_2$ has a coordinate greater than two. This fact motivated us to define a new parameter, the neighbor locating chromatic number, which is closely related to the locating chromatic number. Proposition \ref{tasavi} and Theorem \ref{xlxl2} (see Section 3) show the relation of this parameter with the locating chromatic number. Using this new parameter we determine the exact value of the locating chromatic numbers of $P_m+P_n$, $K_m+P_n$, $P_m+C_n$, $K_m+C_n$, and $C_m+C_n$ in terms of $m$ and $n$.

\section{The neighbor locating chromatic number}

The following parameter can be defined for disconnected graphs.
\begin{defi}
Let $f$ be a proper $k$-coloring of a graph $G$. If for each two distinct vertices $u$ and $v$ with the same color $f(N_G(u))\neq f(N_G(v))$, then we say $f$ is a {\bf neighbor locating coloring} of $G$. The {\bf neighbor locating chromatic number},
$\Cchi_{{}_{L2}}(G)$,  is the minimum number of colors in a neighbor locating
coloring of $G$.
\end{defi}

Note that we always have $\Cchi(G)\leq\Cchi_{{}_{L2}}(G)\leq |V(G)|$. To see the relation between two parameters $\Cchi_{{}_{L}}$ and $\Cchi_{{}_{L2}}$, let $f$ be a $k$-coloring of the connected graph $G$ and $\Pi=(V_1,V_2,\ldots,V_k)$ be an ordered partition of $V(G)$ into
the resulting color classes. Now for each $v\in V(G)$ determine the color code $c_{{}_{\Pi}}(v)$. Then, in the color code of each vertex replace by $2$ all of the coordinates which are at least two. We call these new color codes {\it modified color codes}. Thus, in the modified color code of a vertex $v$ exactly one coordinate is zero, $|N_G(v)|$ coordinates are $1$, and the other coordinates are $2$.

Now it is easy to see that $f$ is a neighbor locating coloring if and only if different vertices of $G$ have different modified color codes. Therefore, each neighbor locating coloring of $G$ is a locating coloring. Hence, $\Cchi_{{}_L}(G)\leq\Cchi_{{}_{L2}}(G)$.
Also, note that when the diameter of $G$ is at most two, then each locating coloring of $G$ is a neighbor locating coloring and hence, $\Cchi_{{}_{L2}}(G)\leq\Cchi_{{}_L}(G)$. Thus, we have the following proposition.

\begin{prop} \label{tasavi}
If $G$ is a connected graph with diameter at most two, then $\Cchi_{{}_L}(G)=\Cchi_{{}_{L2}}(G)$.
\end{prop}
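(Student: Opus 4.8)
The plan is to show that, under the hypothesis, a proper coloring of $G$ is a locating coloring if and only if it is a neighbor locating coloring; equality of the two parameters then follows at once by minimizing the number of colors over this common family of colorings. The two inequalities $\Cchi_{{}_L}(G)\leq\Cchi_{{}_{L2}}(G)$ and $\Cchi_{{}_{L2}}(G)\leq\Cchi_{{}_L}(G)$ have in fact already been sketched in the discussion preceding the statement, so I will organize the argument so as to make the role of the diameter hypothesis explicit.

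First I would record the shape of a color code when the diameter of $G$ is at most two. Fix a proper $k$-coloring $f$ with color classes $\Pi=(V_1,\ldots,V_k)$ and a vertex $v$ with $f(v)=i$. The $i$-th coordinate of $c_{{}_\Pi}(v)$ is $0$. For $j\neq i$, the coordinate $d(v,V_j)$ equals $1$ precisely when some neighbor of $v$ lies in $V_j$, that is, when $j\in f(N_G(v))$; otherwise $V_j$ contains no neighbor of $v$, and since the diameter is at most two, every vertex of $V_j$ is at distance exactly $2$ from $v$, so the coordinate equals $2$. Thus $c_{{}_\Pi}(v)$ already coincides with its modified color code, and it is completely determined by the pair $(f(v),f(N_G(v)))$.

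Second, I would use this normal form to compare the two separation conditions. Two vertices lying in different color classes automatically receive different color codes, since the unique zero coordinate sits in a different position; so only equicolored vertices can clash. For two distinct vertices $u,v$ with $f(u)=f(v)$, the previous step shows that $c_{{}_\Pi}(u)=c_{{}_\Pi}(v)$ if and only if $f(N_G(u))=f(N_G(v))$. Consequently $f$ is a locating coloring, meaning that distinct vertices receive distinct color codes, exactly when no two distinct equicolored vertices share the same neighbor-color set, which is verbatim the definition of a neighbor locating coloring. Hence the locating colorings of $G$ are precisely its neighbor locating colorings, and minimizing the number of colors over this family yields $\Cchi_{{}_L}(G)=\Cchi_{{}_{L2}}(G)$.

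There is essentially no serious obstacle here: the whole content lies in the observation that the diameter-two hypothesis forces every coordinate that is neither the zero coordinate nor a neighbor coordinate to equal $2$, collapsing the color code onto the modified color code. The only point needing mild care is the separate treatment of the two cases, vertices of different colors versus vertices of the same color, so that the equivalence is asserted for every pair of distinct vertices rather than only for equicolored pairs.
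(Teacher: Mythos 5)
Your argument is correct and is essentially the paper's own: the paper introduces the modified color code, observes that a coloring is neighbor locating exactly when modified codes are distinct (giving $\Cchi_{{}_L}(G)\leq\Cchi_{{}_{L2}}(G)$ in general), and notes that the diameter hypothesis forces every color code to coincide with its modified version (giving the reverse inequality). Your write-up just makes this collapse of the code onto the pair $(f(v),f(N_G(v)))$ explicit; no difference in substance.
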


Specially, $\Cchi_{{}_{L2}}(K_n)=n$ and $\Cchi_{{}_{L2}}(K_{m,n})=m+n$. These two parameters can also be arbitrary far apart. For example, by Theorem \ref{xl2p} (see Section 3), for each $n\geq3$,  $\Cchi_{{}_{L2}}(P_n)=\min\{k~|~k\in \Bbb{N},~n\leq {1\over 2}(k^3-k^2)\}$ while $\Cchi_{{}_L}(P_n)=3$. Note that the path $P_9$ is a graph whose diameter is greater than two but $\Cchi_{{}_{L2}}(P_9)=\Cchi_{{}_{L}}(P_9)=3$. Theorem \ref{xl2p} also implies that for each positive integer $m$ there exists a graph (a path) whose neighbor locating chromatic number is $m$.

\section{The locating chromatic number and the join operation}

In this section, we first study the locating
chromatic number of the join of two arbitrary graphs. Then, we determine the locating chromatic numbers of the friendship graphs, and the join of paths, cycles and complete graphs. Specially, we determine $\Cchi_{{}_L}(F_n)$ and $\Cchi_{{}_L}(W_n)$.
\begin{thm} \label{xlxl2}
For two arbitrary graphs $G_1$ and $G_2$, we have $\Cchi_{{}_L}(G_1+G_2)=\Cchi_{{}_{L2}}(G_1)+\Cchi_{{}_{L2}}(G_2)$.
\end{thm}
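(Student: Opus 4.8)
The plan is to reduce the statement about $\Cchi_{{}_L}$ to one about $\Cchi_{{}_{L2}}$ and then prove additivity of $\Cchi_{{}_{L2}}$ directly. Since $G_1+G_2$ is connected and has diameter at most two, Proposition \ref{tasavi} gives $\Cchi_{{}_L}(G_1+G_2)=\Cchi_{{}_{L2}}(G_1+G_2)$, so it suffices to show $\Cchi_{{}_{L2}}(G_1+G_2)=\Cchi_{{}_{L2}}(G_1)+\Cchi_{{}_{L2}}(G_2)$. Two structural facts drive everything. First, in any proper coloring $f$ of $G_1+G_2$ the palettes $f(V(G_1))$ and $f(V(G_2))$ are disjoint, because each vertex of $G_1$ is adjacent to each vertex of $G_2$; consequently any two vertices receiving the same color lie in the same part. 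Second, for $u\in V(G_1)$ we have $N_{G_1+G_2}(u)=N_{G_1}(u)\cup V(G_2)$, hence $f(N_{G_1+G_2}(u))=f(N_{G_1}(u))\cup f(V(G_2))$, and symmetrically for vertices of $G_2$.

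For the upper bound I would take optimal neighbor locating colorings $f_1$ of $G_1$ and $f_2$ of $G_2$ on disjoint palettes of sizes $\Cchi_{{}_{L2}}(G_1)$ and $\Cchi_{{}_{L2}}(G_2)$, and color $G_1+G_2$ by $f=f_1$ on $V(G_1)$ and $f=f_2$ on $V(G_2)$. This $f$ is proper. If $u,v$ are distinct vertices with $f(u)=f(v)$, then by palette disjointness they lie in the same part, say $V(G_1)$; then $f(N_{G_1+G_2}(u))=f_1(N_{G_1}(u))\cup f_2(V(G_2))$ and likewise for $v$. Because $f_1(N_{G_1}(u))$ and $f_1(N_{G_1}(v))$ are contained in $f_1(V(G_1))$ and hence disjoint from the fixed block $f_2(V(G_2))$, the two neighbor palettes differ if and only if $f_1(N_{G_1}(u))\neq f_1(N_{G_1}(v))$, which holds since $f_1$ is neighbor locating. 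Thus $f$ is a neighbor locating coloring and $\Cchi_{{}_{L2}}(G_1+G_2)\leq\Cchi_{{}_{L2}}(G_1)+\Cchi_{{}_{L2}}(G_2)$.

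For the lower bound I would start from an optimal neighbor locating coloring $f$ of $G_1+G_2$ and show its restrictions to $V(G_1)$ and $V(G_2)$ are neighbor locating colorings of the two factors. Write $a=|f(V(G_1))|$ and $b=|f(V(G_2))|$; by palette disjointness $a+b\leq\Cchi_{{}_{L2}}(G_1+G_2)$. If $u,v$ are distinct same-colored vertices of $G_1$, the neighbor locating property of $f$ gives $f(N_{G_1}(u))\cup f(V(G_2))\neq f(N_{G_1}(v))\cup f(V(G_2))$, and cancelling the common block $f(V(G_2))$, which is disjoint from $f(V(G_1))$, yields $f(N_{G_1}(u))\neq f(N_{G_1}(v))$. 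Hence $f|_{V(G_1)}$ is a neighbor locating coloring using $a$ colors, so $a\geq\Cchi_{{}_{L2}}(G_1)$, and symmetrically $b\geq\Cchi_{{}_{L2}}(G_2)$. Combining, $\Cchi_{{}_{L2}}(G_1+G_2)\geq a+b\geq\Cchi_{{}_{L2}}(G_1)+\Cchi_{{}_{L2}}(G_2)$, which closes the argument.

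The only real obstacle is conceptual rather than computational: the locating chromatic number of a join does not split cleanly on its own, which is precisely why the neighbor locating parameter was introduced. The decisive point in both directions is that the cross-part neighborhood of any vertex is the entire opposite part, a block that is identical for all same-colored vertices and drawn from a disjoint palette, so it cancels in every comparison of neighbor color sets. This cancellation is what converts the single neighbor locating condition on $G_1+G_2$ into two independent neighbor locating conditions on $G_1$ and $G_2$, and it is the step I would justify most carefully via palette disjointness.
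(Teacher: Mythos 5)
Your proposal is correct and follows essentially the same route as the paper's proof: reduce to $\Cchi_{{}_{L2}}$ via Proposition \ref{tasavi}, prove the lower bound by restricting an optimal coloring of the join to each factor, and the upper bound by gluing optimal colorings on disjoint palettes. Your explicit justification of the ``cancellation'' of the common block $f(V(G_2))$ is a slightly more careful rendering of a step the paper treats tersely, but the argument is the same.
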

\begin{proof}{The diameter of $G_1+G_2$ is at most two and hence, by Proposition \ref{tasavi}, $\Cchi_{{}_{L}}(G_1+G_2)=\Cchi_{{}_{L2}}(G_1+G_2)$. Let $k_1:=\Cchi_{{}_{L2}}(G_1)$, $k_2:=\Cchi_{{}_{L2}}(G_2)$, and $k:=\Cchi_{{}_{L2}}(G_1+G_2)$. Also, let $f$ be a neighbor locating $k$-coloring of $G_1+G_2$. Vertices of $G_1$ are adjacent to the vertices of $G_2$ and hence, $$\{f(u)~|~u\in V(G_1)\}\bigcap\{f(v)~|~v\in V(G_2)\}=\emptyset.$$ Let $k'_1:=|\{f(u)~|~u\in V(G_1)\}|$ and $k'_2:=|\{f(v)~|~v\in V(G_2)\}|$. Thus, $k=k'_1+k'_2$. Assume that $u$ and $u'$ are two vertices of $G_1$ with the same color. Since $f$ is a neighbor locating coloring and $V(G_2)\subseteq N_{{}_{G_1+G_2}}(u)\bigcap N_{{}_{G_1+G_2}}(u')$, we have $f(N_{{}_{G_1}}(u))\neq f(N_{{}_{G_1}}(u'))$. This means that the restriction of $f$ on $V(G_1)$ is a neighbor locating $k'_1$-coloring of $G_1$. Hence, $k_1\leq k'_1$. A similar argument holds for $G_2$. Thus, $k_1+k_2\leq k'_1+k'_2=k$.\\
Now let $f_1$ be a neighbor locating $k_1$-colorings of $G_1$ with the color set $\{1,2,\ldots,k_1\}$, and $f_2$  be a neighbor locating $k_2$-colorings of $G_2$ with the color set $\{k_1+1,k_1+2,\ldots,k_1+k_2\}$. Define a $(k_1+k_2)$-coloring $f'$ of $G_1+G_2$ as $f'(u)=f_1(u)$ when $u\in V(G_1)$, and $f'(v)=f_2(v)$ when $v\in V(G_2)$. If $z_1$ and $z_2$ are two vertices in $G_1+G_2$ with $f'(z_1)=f'(z_2)$, then $\{z_1,z_2\}\subseteq V(G_1)$ or $\{z_1,z_2\}\subseteq V(G_2)$. Without loss of generality, assume that $\{z_1,z_2\}\subseteq V(G_1)$. Since $f_1$ is a neighbor locating $k_1$-coloring and $V(G_2)\subseteq N_{{}_{G_1+G_2}}(z_1)\bigcap N_{{}_{G_1+G_2}}(z_2)$, we have $f'(N_{{}_{G_1+G_2}}(z_1))\neq\ f'(N_{{}_{G_1+G_2}}(z_2))$. This means that  $f'$ is a neighbor locating $(k_1+k_2)$-coloring of $G_1+G_2$ and  hence, $k\leq k_1+k_2$. Thus, $k=k_1+k_2$, which completes the proof.
}\end{proof}

Theorem \ref{xlxl2} and Proposition \ref{tasavi} imply the following corollary.

\begin{cor} \label{diamtasavi}
If $G_1$ and $G_2$ are two connected graphs with diameter at most two, then
$\Cchi_{{}_L}(G_1+G_2)=\Cchi_{{}_{L}}(G_1)+\Cchi_{{}_{L}}(G_2)$.
\end{cor}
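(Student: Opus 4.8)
The plan is to derive this directly from the two results just established, since the corollary is nothing more than their composition. First I would invoke Theorem \ref{xlxl2}, which holds for \emph{arbitrary} graphs $G_1$ and $G_2$, to write $\Cchi_{{}_L}(G_1+G_2)=\Cchi_{{}_{L2}}(G_1)+\Cchi_{{}_{L2}}(G_2)$. This already computes the left-hand side in terms of the neighbor locating chromatic numbers of the two factors, so all that remains is to replace each $\Cchi_{{}_{L2}}$ on the right by the corresponding $\Cchi_{{}_L}$.

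This replacement is exactly where the diameter hypothesis enters. By assumption $G_1$ and $G_2$ are connected and have diameter at most two, so Proposition \ref{tasavi} applies to each of them separately, yielding $\Cchi_{{}_{L2}}(G_1)=\Cchi_{{}_L}(G_1)$ and $\Cchi_{{}_{L2}}(G_2)=\Cchi_{{}_L}(G_2)$. Substituting these two equalities into the expression obtained from Theorem \ref{xlxl2} gives $\Cchi_{{}_L}(G_1+G_2)=\Cchi_{{}_L}(G_1)+\Cchi_{{}_L}(G_2)$, which is the claim.

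There is no genuine obstacle here: the substantive work lives entirely in Theorem \ref{xlxl2}, which transfers the locating number of the join to the neighbor-locating numbers of the factors, and in Proposition \ref{tasavi}, which identifies the two parameters under a diameter bound. I would only flag one point of care, namely that the diameter-at-most-two hypothesis is required of the \emph{factors} $G_1$ and $G_2$, not of the join. The join $G_1+G_2$ always has diameter at most two, which is precisely why Theorem \ref{xlxl2} could be stated with no diameter assumption at all; the hypothesis in this corollary serves only to license the passage from $\Cchi_{{}_{L2}}(G_i)$ to $\Cchi_{{}_L}(G_i)$ for $i=1,2$.
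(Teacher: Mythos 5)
Your proposal is correct and is exactly the paper's argument: the paper derives the corollary by combining Theorem \ref{xlxl2} (giving $\Cchi_{{}_L}(G_1+G_2)=\Cchi_{{}_{L2}}(G_1)+\Cchi_{{}_{L2}}(G_2)$) with Proposition \ref{tasavi} applied to each factor. Your remark about where the diameter hypothesis is actually used is accurate and matches the paper's intent.
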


Let $m,t$ be two positive integers and $G:=tK_m$ be the graph consisting of $t$ disjoint copies of $K_m$. A coloring of $G$ is a neighbor locating coloring if and only if no two different components of $G$ have the same color set. For a positive integer $k$, the set $[k]$ has ${k\choose m}$ distinct subsets of size $m$. Thus, $\Cchi_{{}_{L2}}(G)=\min\{k~|~t\leq {k\choose m}\}$. Now Theorem \ref{xlxl2} implies the following result.
\begin{prop}
For a positive integer $t$, let $n:=2t+1$. Then, the locating chromatic number of the friendship graph $Fr_n$ is $1+\min\{k~|~t\leq {k\choose 2}\}$.
\end{prop}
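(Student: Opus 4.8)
The plan is to recognize the friendship graph as a join and then apply Theorem \ref{xlxl2} directly. By definition, $Fr_n$ with $n=2t+1$ is obtained by joining $K_1$ to the $t$ disjoint copies of $K_2$, so it is precisely the join $Fr_n=K_1+tK_2$, where $tK_2$ is the disjoint union of $t$ edges. Thus I take $G_1:=K_1$ and $G_2:=tK_2$ and aim to compute $\Cchi_{{}_{L2}}(G_1)$ and $\Cchi_{{}_{L2}}(G_2)$ separately.

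First I would invoke Theorem \ref{xlxl2}, which applies to two \emph{arbitrary} graphs and in particular permits the disconnected factor $tK_2$ (the parameter $\Cchi_{{}_{L2}}$ was defined for disconnected graphs). This yields immediately
$$\Cchi_{{}_L}(Fr_n)=\Cchi_{{}_L}(K_1+tK_2)=\Cchi_{{}_{L2}}(K_1)+\Cchi_{{}_{L2}}(tK_2).$$
It then remains only to evaluate the two summands.

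For the first summand, $K_1$ is a single vertex, so one color suffices and the neighbor locating condition is vacuous; hence $\Cchi_{{}_{L2}}(K_1)=1$. For the second summand I would apply the formula established in the paragraph immediately preceding this proposition, namely $\Cchi_{{}_{L2}}(tK_m)=\min\{k~|~t\leq{k\choose m}\}$, in the case $m=2$. This gives $\Cchi_{{}_{L2}}(tK_2)=\min\{k~|~t\leq{k\choose 2}\}$. Substituting both values into the displayed equation produces $\Cchi_{{}_L}(Fr_n)=1+\min\{k~|~t\leq{k\choose 2}\}$, as claimed.

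There is no serious obstacle here, since every ingredient is already in place: the proposition is essentially a direct specialization of Theorem \ref{xlxl2} together with the $tK_m$ count. The only points requiring a moment of care are confirming the structural identity $Fr_n=K_1+tK_2$ and checking the trivial base value $\Cchi_{{}_{L2}}(K_1)=1$; the substantive combinatorial content (that distinct components must receive distinct $2$-element color sets, whence the $\binom{k}{2}$ bound) has already been verified in the preceding discussion and can simply be cited.
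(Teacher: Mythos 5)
Your proposal is correct and follows exactly the paper's route: the paper's (implicit) proof is precisely the identification $Fr_n=K_1+tK_2$, the formula $\Cchi_{{}_{L2}}(tK_m)=\min\{k~|~t\leq{k\choose m}\}$ from the preceding paragraph, and an application of Theorem \ref{xlxl2}. No differences worth noting.
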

Let $P_n=v_1v_2\cdots v_n$ be a path with vertex set $\{v_1,v_2,\ldots,v_n\}$ and edge set $\{v_1v_2,v_2v_3,\ldots,v_{n-1}v_n\}$, and $C_n=v_1v_2\cdots v_nv_1$ be a cycle with vertex set $\{v_1,v_2,\ldots,v_n\}$ and edge set $\{v_1v_2,v_2v_3,\ldots,v_{n-1}v_n,v_nv_1\}$. Let $G\in\{P_n,C_n\}$. Each coloring $f$ of $G$ can be represented by a sequence, say $[f(v_1),f(v_2),\ldots,f(v_n)]$. For convince, we identify each coloring with its sequence and work with the colors instated of vertices. For $1\leq n_1\leq n$, let $f_{|_{[n_1]}}:=[f(v_1),f(v_2),\ldots,f(v_{n_1})]$ be the restriction of $f$ on the subgraph induced by the vertices $\{v_1,v_2,\ldots,v_{n_1}\}$.

If there exists a vertex $v_{{}_l}\in V(G)$ such that $f(v_{{}_l})=s$ and $f(N_{G}(v_{{}_l}))=\{r,t\}$, then we say that the segment $[[r,s,t]]$ occurs in (the corresponding sequence of) $f$. This notation indicates that in $G$ there exists a vertex with color $s$ between two vertices with colors $r$ and $t$. Note that $[[r,s,t]]=[[t,s,r]]$. Also, if $f(v_{{}_l})=s$ and $f(N_{G}(v_{{}_l}))=\{r\}$, then we say that the segment $[[r,s,r]]$ occurs in $f$. This indicates that there exists a vertex with color $s$ between two vertices with color $r$, or there exists a vertex of degree one (a leaf) with color $s$ whose neighbor has color $r$. When $r,s,t$ are elements of $[k]$ with $r\neq s$ and $t\neq s$, then we say that $[[r,s,t]]$ is a possible segment over the set $[k]$. Using these notations we have the following observation.

\begin{obs}
Let $f$ be a $k$-coloring of $P_n$ or $C_n$. Then, $f$ is a neighbor locating $k$-coloring if and only if each possible segment over the set $[k]$ occurs at most once in $f$.
\end{obs}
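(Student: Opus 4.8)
The plan is to show that the condition on segments is merely a restatement of the defining property of a neighbor locating coloring, so the proof amounts to unwinding the definitions carefully. First I would observe that every vertex $v_{{}_l}$ of $G$ determines exactly one segment: if $f(v_{{}_l})=s$ and $f(N_G(v_{{}_l}))=\{r,t\}$, then the segment of $v_{{}_l}$ is $[[r,s,t]]$, where in the degenerate case $f(N_G(v_{{}_l}))=\{r\}$ (a leaf of $P_n$, or an internal vertex both of whose neighbors are colored $r$) we set $t=r$ and read off $[[r,s,r]]$. Because $f$ is proper, each neighbor of $v_{{}_l}$ receives a color different from $s$, so $r\neq s$ and $t\neq s$; hence every segment that occurs is automatically a possible segment over $[k]$, and no information is lost by restricting attention to possible segments.

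Next I would record the key equivalence. Since $[[r,s,t]]=[[t,s,r]]$, the segment of a vertex encodes precisely the pair consisting of its color $f(v_{{}_l})$ and the \emph{set} $f(N_G(v_{{}_l}))$ of colors appearing on its neighbors; it records no ordering of the two neighbor colors. Consequently, two distinct vertices $u$ and $v$ give rise to the same segment if and only if $f(u)=f(v)$ and $f(N_G(u))=f(N_G(v))$. This is the crux: the set-valued nature of $f(N_G(\cdot))$ in the definition of neighbor locating coloring matches exactly the symmetry $[[r,s,t]]=[[t,s,r]]$ built into the segment notation.

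With this in hand the proof closes quickly. The statement ``some possible segment over $[k]$ occurs at least twice in $f$'' is, by the equivalence above, the same as ``there exist two distinct vertices $u\neq v$ with $f(u)=f(v)$ and $f(N_G(u))=f(N_G(v))$''. But this last assertion is exactly the negation of the defining condition of a neighbor locating coloring, namely that any two distinct equally-colored vertices $u,v$ satisfy $f(N_G(u))\neq f(N_G(v))$. Taking contrapositives yields: $f$ is a neighbor locating $k$-coloring if and only if every possible segment occurs at most once, as claimed.

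The only point requiring genuine care --- and the one I would expect to be the main obstacle --- is the bookkeeping around the single-neighbor-color case. One must check that the notation $[[r,s,r]]$ correctly and unambiguously captures both a degree-one vertex of $P_n$ colored $s$ whose unique neighbor is colored $r$, and a degree-two vertex colored $s$ whose two neighbors are both colored $r$; in either situation $f(N_G(v_{{}_l}))=\{r\}$, so both are legitimately recorded by the same segment, and the equivalence of the previous paragraph goes through unchanged. Everything else is a direct translation between the language of segments and the language of colors and neighbor color sets.
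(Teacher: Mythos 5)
Your proof is correct and follows exactly the route the paper intends: the paper states this as an Observation without proof, treating it as an immediate unwinding of the definitions, and your argument --- that a vertex's segment encodes precisely the pair $(f(v),\,f(N_G(v)))$, that properness guarantees every occurring segment is a possible one, and that the symmetry $[[r,s,t]]=[[t,s,r]]$ matches the set-valued nature of $f(N_G(\cdot))$ --- is the correct and complete justification. Your careful handling of the $[[r,s,r]]$ case (leaf of $P_n$ versus degree-two vertex with monochromatic neighborhood) is exactly the one point that needed to be checked.
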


Now assume that $f$ is a neighbor locating $k$-coloring of $G$, $G\in\{P_n,C_n\}$, for some $k\in \Bbb{N}$. If $u$ and $v$ are two vertices in $G$ with the same color $i$, $1\leq i\leq k$, then $f(N_{G}(u))\neq f(N_{G}(v))$. Note that for each $u\in V(G)$, $|f(N_G(u))|\leq2$. Hence, we have
$$|\{u~|~u\in V(G),~ f(u)=i,~|f(N_{G}(u))|=1\}|\leq k-1$$ and,
$$|\{u~|~u\in V(G),~ f(u)=i,~|f(N_{G}(u))|=2\}|\leq {k-1\choose 2}.$$
This means that there are at most $(k-1)+{k-1\choose 2}={1\over2}(k^2-k)$ vertices in $G$ with color $i$. Hence, $n\leq k({k^2-k\over 2})$. Therefore, we have the following proposition.

\begin{prop} \label{npncn}
Let $n,k$ be two positive integers. If there exists a neighbor locating $k$-coloring $f$ of $P_n$ or $C_n$, then $n\leq {1\over2}(k^3-k^2)$. The equality holds if and only if each possible segment over the set $[k]$ occurs exactly once in $f$.
\end{prop}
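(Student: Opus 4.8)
The plan is to read the statement through the Observation just above it, which characterizes neighbor locating $k$-colorings of $G\in\{P_n,C_n\}$ as exactly those in which no possible segment over $[k]$ occurs twice. First I would attach to every vertex $v_{{}_l}$ of $G$ a single possible segment: writing $s:=f(v_{{}_l})$ and $f(N_G(v_{{}_l}))=\{r,t\}$ (with the convention $r=t$ whenever $|f(N_G(v_{{}_l}))|=1$, as happens at a leaf of $P_n$ or when the two neighbors of $v_{{}_l}$ share a color), I send $v_{{}_l}$ to $[[r,s,t]]$. Since in $P_n$ ($n\geq 2$) and in $C_n$ every vertex has one or two neighbors and $f$ is proper, this assignment is well defined and always lands in a genuine possible segment over $[k]$ (indeed $r\neq s$ and $t\neq s$).

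The heart of the argument is that this assignment is injective precisely when $f$ is neighbor locating: two distinct vertices $u,u'$ receive the same segment if and only if $f(u)=f(u')$ and $f(N_G(u))=f(N_G(u'))$, which is exactly the configuration a neighbor locating coloring forbids. Assuming $f$ is neighbor locating, injectivity then bounds $n=|V(G)|$ by the number of possible segments over $[k]$. It remains to count them: fixing the center color $s$ in one of $k$ ways, the unordered pair $\{r,t\}$ runs over all size-two multisets from the $k-1$ colors distinct from $s$, of which there are $(k-1)+{k-1\choose 2}={1\over2}(k^2-k)$. Multiplying by the $k$ choices of center yields $k\cdot{1\over2}(k^2-k)={1\over2}(k^3-k^2)$ possible segments in all, so $n\leq{1\over2}(k^3-k^2)$. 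This is exactly the per-color count displayed immediately before the statement, repackaged as a single injection.

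For the equality clause I would observe that, injectivity being guaranteed by the hypothesis, the bound is attained if and only if the assignment is also surjective, i.e.\ every possible segment over $[k]$ is hit. Together with injectivity this is precisely the statement that each possible segment occurs exactly once in $f$. Equivalently, in the chain $n=\sum_{i=1}^{k}|f^{-1}(i)|\leq\sum_{i=1}^{k}{1\over2}(k^2-k)$ equality forces every color class to meet its own bound ${1\over2}(k^2-k)$, which for the color $i$ means that all $k-1$ one-neighbor-color segments and all ${k-1\choose 2}$ two-neighbor-color segments centered at $i$ are realized.

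I do not anticipate a real obstacle, since this is essentially a clean counting-plus-injection argument with a bijection criterion for equality. The only points that need care are the bookkeeping of the degenerate segments $[[r,s,r]]$, which must simultaneously cover leaves and those degree-two vertices whose two neighbors share a color, so that the vertex-to-segment map is total and the count ${1\over2}(k^2-k)$ is exact; and the trivial edge cases (a single vertex, or $k$ too small to properly color $G$) that the standing assumption $n\geq 3$ renders vacuous.
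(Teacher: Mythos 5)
Your argument is correct and is essentially the paper's own: the paper bounds each color class by $(k-1)+{k-1\choose 2}={1\over2}(k^2-k)$ via the distinct neighbor-color sets and multiplies by $k$, which is exactly your vertex-to-segment injection read one color class at a time. Your explicit treatment of the equality clause (injective plus surjective equals each segment occurring exactly once) is a slightly more careful write-up of what the paper leaves implicit, but it is the same counting argument.
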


When $f=[f(v_1),f(v_2),\ldots,f(v_t)]$ is a coloring of $P_{t}=v_1v_2\cdots v_t$, $f'=[f'(v'_1),f'(v'_2),\ldots,f'(v'_{t'})]$ is a coloring of $P_{t'}=v'_1v'_2\cdots v'_{t'}$, $\{v_1,v_2,\ldots,v_t\}\bigcap\{v'_1,v'_2,\ldots,v'_{t'}\}=\emptyset$, and $f(v_t)\neq f'(v'_1)$, then by $f\oplus f'$ we mean $$[f(v_1),f(v_2),\ldots,f(v_t),f'(v'_1),f'(v'_2),\ldots,f'(v'_{t'})],$$ which is a coloring of the path $P_{t+t'}:=v_1v_2\cdots v_tv'_1v'_2\cdots v'_{t'}$. In fact, we stick the colorings of two small paths in order to get a coloring of a larger path. Note that the segment corresponding to $v_t$ in $f$ is $[[f(v_{t-1}),f(v_t),f(v_{t-1})]]$, while the segment corresponding to $v_t$ in $f\oplus f'$ is $[[f(v_{t-1}),f(v_t),f'(v'_1)]]$. In this case we say that the segment $[[f(v_{t-1}),f(v_t),f'(v'_1)]]$ occurs between $f$ an $f'$. A similar argument holds for $v'_1$. For convince, for the empty sequence $\emptyset$ define $f\oplus\emptyset=\emptyset\oplus f=f$.

Now we are ready to determine the neighbor locating chromatic numbers of paths.

\begin{thm} \label{xl2p}
For a positive integer $n\geq 2$, $\Cchi_{{}_{L2}}(P_n)=m$, where $m:=\min\{k~|~k\in \Bbb{N},~n\leq {1\over2}(k^3-k^2)\}$. Particularly, there exists a neighbor locating $m$-coloring $f_n$ of the path $P_n=v_1v_2\cdots v_n$ such that  $f_n(v_{n-1})=2$ and $f_n(v_n)=1$. For $n\geq 9$, $f_n(v_{n-2})=m$. Moreover, for $n\geq 9$ and $n\neq {1\over 2}(m^3-m^2)-1$,  $f_n(v_1)=2$ and $f_n(v_2)=1$.
\end{thm}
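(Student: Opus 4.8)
The plan is to prove Theorem~\ref{xl2p} in two parts. First, the lower bound $\Cchi_{{}_{L2}}(P_n)\geq m$ is immediate from Proposition~\ref{npncn}: any neighbor locating $k$-coloring of $P_n$ forces $n\leq \frac{1}{2}(k^3-k^2)$, so by the minimality in the definition of $m$ no $k<m$ can admit such a coloring. The entire difficulty is therefore the \emph{construction}: for every $n\geq 2$ I must exhibit a neighbor locating $m$-coloring $f_n$ of $P_n=v_1v_2\cdots v_n$ satisfying the prescribed boundary values $f_n(v_{n-1})=2$, $f_n(v_n)=1$, and (for $n\geq 9$) $f_n(v_{n-2})=m$ and the left-end conditions.

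My approach would be to build colorings segment by segment, exploiting the $\oplus$ operation and the Observation that a coloring of a path is neighbor locating precisely when each possible segment $[[r,s,t]]$ over $[k]$ occurs at most once. The guiding principle from Proposition~\ref{npncn} is that the maximum length $\frac{1}{2}(m^3-m^2)$ is achieved exactly when \emph{every} possible segment over $[m]$ is used exactly once. So the natural strategy is to first design, for the extremal length $n=\frac{1}{2}(m^3-m^2)$, a single long sequence that realizes each segment exactly once -- essentially an Eulerian-type traversal through the space of segments -- and then obtain shorter colorings by deleting an appropriate terminal block of vertices while preserving the ``each segment at most once'' property. The two-sided prescription of boundary colors suggests organizing the construction so that the sequence is grown from a fixed right-hand tail $[\ldots,m,2,1]$ (giving $f_n(v_{n-2})=m$, $f_n(v_{n-1})=2$, $f_n(v_n)=1$) and a fixed left-hand head $[2,1,\ldots]$, with the interior absorbing the remaining segments.

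Concretely, I would set up an induction or a direct block decomposition on the color set $[m]$. One clean way is to treat the colorings as walks: a neighbor locating coloring that uses each segment at most once corresponds to a trail in a suitable auxiliary multigraph whose ``edges'' encode the segments $[[r,s,t]]$. I would verify that for the extremal $n$ one can route a single long trail through all segments with the desired endpoints, handling the symmetry $[[r,s,t]]=[[t,s,r]]$ carefully so that each unordered segment is counted once. For non-extremal $n$, I would peel off a suffix: since the right tail is fixed as $\ldots,m,2,1$, shortening the path removes interior segments without disturbing the three terminal colors, and I would check that the residual coloring still has all segments distinct. The exceptional case $n=\frac{1}{2}(m^3-m^2)-1$ is flagged precisely because removing a single vertex from the extremal coloring cannot simultaneously satisfy both the left-end and right-end prescriptions, so there the left-end condition is dropped.

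The main obstacle I anticipate is the bookkeeping that guarantees \emph{simultaneously} (i) each segment over $[m]$ occurs at most once, (ii) the exact prescribed colors at $v_1,v_2$ and $v_{n-2},v_{n-1},v_n$, and (iii) full coverage of all $n$ vertices for every $n$ in the range $\frac{1}{2}((m-1)^3-(m-1)^2)<n\leq\frac{1}{2}(m^3-m^2)$. This is a delicate combinatorial design problem, and the regime $n\geq 9$ (where $m\geq 3$ and the tail $\ldots,m,2,1$ first becomes forced) versus small $n$ will need separate treatment; I expect the bulk of the proof to be the explicit recursive scheme realizing these constraints, most naturally verified by exhibiting a base construction at the extremal length together with a truncation lemma showing the endpoint data survives deletion of vertices.
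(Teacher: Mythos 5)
There is a genuine gap. Your lower bound is fine and matches the paper, and your guiding principle --- that the extremal length $n=\frac{1}{2}(m^3-m^2)$ is attained exactly when every possible segment over $[m]$ occurs exactly once --- is the right one. But the two steps that constitute essentially all of the work are only announced, not carried out. First, the existence of a single sequence realizing every segment over $[m]$ exactly once \emph{with the prescribed colors at both ends} is a nontrivial design statement; your Eulerian-trail reformulation is a plausible framework, but you would still have to define the auxiliary multigraph precisely (handling the identification $[[r,s,t]]=[[t,s,r]]$ and the degenerate end-segments $[[r,s,r]]$), verify connectivity and the degree/parity conditions, and force the trail to begin and end at the prescribed pairs. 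The paper instead writes the extremal coloring down explicitly as $f_{n'}\oplus[m,m-1,m,m-2,m,m-3]\oplus A\oplus D_{[m-1]}\oplus T$ and checks, case by case, that no segment repeats; some explicit verification of this kind is unavoidable and is absent from your plan.

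Second, and more seriously, your mechanism for the non-extremal lengths does not work as described. You propose to ``peel off a suffix'' of the extremal coloring, but the prescribed right tail $[\ldots,m,2,1]$ sits precisely at the end of the sequence, so deleting a suffix destroys it; deleting \emph{interior} vertices instead creates a new adjacency between the two survivors flanking the deletion, hence new segments that may coincide with segments already present elsewhere --- the ``each segment at most once'' property is not hereditary under such surgery. This is exactly where the paper spends most of its effort: it interpolates between the $(m-1)$-color extremal coloring of length $n'=\frac{1}{2}((m-1)^3-(m-1)^2)$ and the $m$-color extremal coloring of length $n'+2(m-1)+3{m-1\choose 2}$ by adding one vertex at a time, at each step re-specifying the inserted block and re-checking all segments (twelve ad hoc initial steps, then the block $A$ built up over $2(m-4)$ steps, then the blocks $D_4,\dots,D_{m-3}$ three entries at a time, then a final $6m-12$ steps obtained by shrinking the $f_{n'}$ prefix). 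Without an argument of this kind, or a genuinely proved replacement for your truncation step, your proposal covers at best the extremal $n$, and even that only modulo the unproved traversal claim. You also leave the base cases $2\le n\le 50$ unaddressed; the paper handles these with an explicit table, and they seed the induction.
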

\begin{proof}{Since $${1\over2}((m-1)^3-(m-1)^2)<n\leq {1\over2}(m^3-m^2),$$ if we give a neighbor locating $m$-coloring of $P_n$, then Proposition \ref{npncn} implies that $\Cchi_{{}_{L2}}(P_n)=m$.
\\For $2\leq n\leq 50$, consider the colorings which are listed in Table 1. It is not hard to check that each possible segment over the set $[5]$ occurs at most once in the $f_i,~2\leq i\leq 50$. Hence, each $f_i$ is a neighbor locating coloring. Note that ${3^3-3^2\over2}=9$, ${4^3-4^2\over2}=24$, and ${5^3-5^2\over2}=50$. Also, note that all of the possible segments over the sets $[3]$, $[4]$, and $[5]$ occur in $f_9$, $f_{24}$, and $f_{50}$, respectively.
\vspace*{-3mm}
\begin{center}
\begin{table}[h]\label{t1}
{\caption {Optimal neighbor locating colorings for the small paths.}}
\begin{tabular}{|l|l|}
 \hline
$f_2:=[2,1]$ &  $f_{19}:=f_9\oplus [4,1,4,3,4,3,2,4,2,1]$ \\
$f_3:=[3,2,1]$ &  $f_{20}:=f_9\oplus [4,3,1,4,2,4,3,2,4,2,1]$ \\
$f_4:=[1,3,2,1]$ &  $f_{21}:=f_9\oplus [4,1,4,2,4,3,4,3,2,4,2,1]$ \\
$f_5:=[2,1,3,2,1]$ &  $f_{22}:=f_7\oplus [4,3,4,2,4,1,4,1,3,4,3,2,4,2,1]$ \\
$f_6:=[3,2,3,1,2,1]$ &  $f_{23}:=f_8\oplus[4,3,4,2,4,1,4,1,3,4,3,2,4,2,1]$ \\
$f_7:=[2,1,3,2,3,2,1]$ &  $f_{24}:=f_9\oplus [4,3,4,2,4,1,4,1,3,4,3,2,4,2,1]$ \\
$f_8:=[3,2,3,1,3,1,2,1]$ &  $f_{25}:=f_{{24}_{|_{[22]}}}\oplus [5,2,1]$ \\
$f_9:=[2,1,3,1,3,2,3,2,1]$ &  $f_{26}:=f_{{24}_{|_{[22]}}}\oplus [2,5,2,1]$ \\
$f_{10}:=[2,1,3,1,3,2,3,4,2,1]$ &  $f_{27}:=f_{{24}_{|_{[22]}}}\oplus [2,1,5,2,1]$ \\
$f_{11}:=[2,1,3,1,3,2,3,2,4,2,1]$ & $f_{28}:=f_{{24}_{|_{[22]}}}\oplus [5,3,1,5,2,1]$ \\
$f_{12}:=[2,1,3,1,3,2,3,2,1,4,2,1]$ & $f_{29}:=f_{{24}_{|_{[22]}}}\oplus [5,3,5,1,5,2,1]$ \\
$f_{13}:=[2,1,3,1,3,2,3,4,3,1,4,2,1]$ & $f_{30}:=f_{{24}_{|_{[22]}}}\oplus [5,3,5,1,3,5,2,1]$ \\
$f_{14}:=[2,1,3,1,3,2,3,4,3,4,1,4,2,1]$ &  $f_{31}:=f_{{24}_{|_{[22]}}}\oplus [5,3,5,1,5,2,5,2,1]$ \\
$f_{15}:=[2,1,3,1,3,2,3,4,3,4,1,3,4,2,1]$ & $f_{32}:=f_{{24}_{|_{[22]}}}\oplus [5,3,5,1,3,5,2,5,2,1]$ \\
$f_{16}:=[2,1,3,1,3,2,3,4,3,4,1,4,2,4,2,1]$ & $f_{33}:=f_{24}\oplus[5,1,3,5,3,2,5,2,1]$ \\
$f_{17}:=[2,1,3,1,3,2,3,4,3,4,1,3,4,2,4,2,1]$ & $f_{34}:=f_{24}\oplus[5,1,5,3,5,3,2,5,2,1]$ \\
$f_{18}:=F_9\oplus [4,1,3,4,3,2,4,2,1]$ & \\
\hline
\end{tabular}
\begin{tabular}{|l|}
$f_{26+i}:=f_{i}\oplus[5,4,5,3,5,2,5,1,5,3,4,5,4,2,5,4,1,5,1,3,5,3,2,5,2,1], ~~~9\leq i\leq 24\!$~~~~~~~~~~~\\
\hline
\end{tabular}
\end{table}
\end{center}
\vspace*{-7mm}
Here after let $n\geq 51$. Thus, $m\geq 6$. Now in an inductive way we prove the theorem. Let $n':={1\over2}((m-1)^3-(m-1)^2)$, and assume that $f_{n'}$ is a neighbor locating $(m-1)$-coloring of $P_{n'}$ with the mentioned properties in the theorem (let us to denote this by writing $f_{n'}=[2,1,\ldots,m-1,2,1]$). Specially, by Proposition \ref{npncn}, all of the possible segments over the set $[m-1]$ occur in $f_{n'}$. Note that ${1\over2}(m^3-m^2)=n'+(2(m-1)+3{m-1\choose2})$. Using the new color $``m"$, we will add $2(m-1)+3{m-1\choose2}$ new entries to $f_{n'}$. These new entries are $(m-1)$ pairs of the form $[m,i]$, and ${m-1\choose2}$ triples of the form $[m,i,j]$, $\{i,j\}\subseteq [m-1]$. Step by step, we provide a neighbor locating $m$-coloring $f_{i}$ for each $n'<i\leq n'+(2(m-1)+3{m-1\choose 2})$. In each step we modify the coloring for a path with one more vertex. Equivalently, we add a new entry to some where in the coloring sequence and probably, we change some other entries.

Let $T:=[m,1,3,m,3,2,m,2,1]$ and $A:=[m,m-4,m,m-5,\ldots,m,2,m,1]$. For each $i,j$, $4\leq i\leq m-1$, and $1\leq j\leq i-2$, let
$D_{i,j}:=[m,i,j,m,i,j-1,\ldots,m,i,1]$. Also, let $D_i:=[m,i-1,i]\oplus D_{i,i-2}$ and $D_{[i]}:=D_i\oplus D_{i-1}\oplus\cdots\oplus D_4$. For example we have $D_5=[m,4,5,m,5,3,m,5,2,m,5,1]$. For convince, define $D_{i,0}=D_3=D_{[3]}=\emptyset$.
Now consider the following coloring which is an $m$-coloring of a path with $n'+2(m-1)+3{m-1\choose 2}$ vertices.
$$f_{n'+2(m-1)+3{m-1\choose2}}=f_{n'}\oplus[m,m-1,m,m-2,m,m-3]\oplus A\oplus D_{[m-1]}\oplus T.$$
This is our final ``complete model". Using this complete model we want to build the smaller colorings $\{f_{n'+i}~|~1\leq i<2(m-1)+3{m-1\choose 2}\}$. Note that all of the possible segments over the set $[m]$ occur in $f_{n'+2(m-1)+3{m-1\choose2}}$, each of them just once. More precisely,
\begin{itemize}
\item All of the possible segments over the set $[m-1]$ occur in $f_{n'}$, except the segment $[[2,1,2]]$ which occurs at the end of $T$.
\item The segments of the form $[[m,i,j]]$, where $i,j\in [m-1]$ and $i\neq j$, occur in $D_{[m-1]}\oplus T$, except the segment $[[m,1,2]]=[[2,1,m]]$ which occurs between $f_{n'}$ and $[m,m-1,m,m-2,m,m-3]$.
\item The segments of the form $[[m,i,m]]$, $2\leq i\leq m-1$, occur in $[m,m-1,m,m-2,m,m-3]\oplus A$. The segment $[[m,1,m]]$ occurs between $A$ and $D_{[m-1]}$.
\item The segments of the form $[[i+1,m,i]]$, $1\leq i\leq m-2$, occur in $[m,m-1,m,m-2,m,m-3]\oplus A$.
\item The segments of the forms $[[j,m,i]]$ and $[[i,m,i]]$, where $4\leq i\leq m-1$ and $2\leq j\leq i-2$, occur in $D_i$, inside $D_{[m-1]}$.
\item The segments of the form $[[1,m,j]]$, $3\leq j\leq m-3$, occur between $D_{j+2}$ and $D_{j+1}$, inside $D_{[m-1]}$. The segment $[[1,m,1]]$ occurs between $D_{[m-1]}$ and $T$. The segment $[[1,m,m-2]]$ occurs between $A$ and $D_{[m-1]}$, and the segment $[[1,m,m-1]]$ occurs between $f_{n'}$ and $[m,m-1,m,m-2,m,m-3]$.
\item The segments of the form $[[2,m,j]]$, $4\leq j\leq m-1$, occur in $D_j$. The segment $[[2,m,2]]$ occurs in $T$.
\item The segments of the form $[[3,m,j]]$, $5\leq j\leq m-1$, occur in $D_j$. The segment $[[3,m,3]]$ occurs in $T$.
\end{itemize}
Note that $f_{24}$ and $f_{50}$ are given using this complete model. Now we proceed to build the other smaller colorings. Note that by the hypothesis, we have $f_{n'}=[2,1,\ldots,m-1,2,1]$. Let
\begin{itemize}
\item[] $f_{n'+1}:=f_{{n'}_{|_{[n'-2]}}}\oplus [m,2,1]$,
\item[] $f_{n'+2}:=f_{{n'}_{|_{[n'-2]}}}\oplus [2,m,2,1]$,
\item[] $f_{n'+3}:=f_{{n'}_{|_{[n'-2]}}}\oplus [2,1,m,2,1]$,
\item[] $f_{n'+4}:=f_{{n'}_{|_{[n'-2]}}}\oplus [m,3,1,m,2,1]$,
\item[] $f_{n'+5}:=f_{{n'}_{|_{[n'-2]}}}\oplus [m,3,m,1,m,2,1]$,
\item[] $f_{n'+6}:=f_{{n'}_{|_{[n'-2]}}}\oplus [m,3,m,1,3,m,2,1]$,
\item[] $f_{n'+7}:=f_{{n'}_{|_{[n'-2]}}}\oplus [m,3,m,1,m,2,m,2,1]$,
\item[] $f_{n'+8}:=f_{{n'}_{|_{[n'-2]}}}\oplus [m,3,m,1,3,m,2,m,2,1]$,
\item[] $f_{n'+9}:=f_{{n'}_{|_{[n'-2]}}}\oplus [2,1,m,1,3,m,3,2,m,2,1]$,
\item[] $f_{n'+10}:=f_{{n'}_{|_{[n'-2]}}}\oplus [2,1,m,1,m,3,m,3,2,m,2,1]$,
\item[] $f_{n'+11}:=f_{{n'}_{|_{[n'-2]}}}\oplus [2,1,m,m-1,1,m,3,m,3,2,m,2,1]$,
\item[] $f_{n'+12}:=f_{{n'}_{|_{[n'-2]}}}\oplus [2,1,m,m-1,m-2,m,1,3,m,3,2,m,2,1]$.
\end{itemize}

Let $1\leq i\leq12$. The coloring $f_{n'+i}$ has two parts. The first part is $f_{{n'}_{|_{[n'-2]}}}$ which the color $m$ does not appear in it, and the second part which $m$ appears in it. Since $f_{n'}$ is a neighbor locating $(m-1)$-coloring, each possible segment over the set $[m-1]$ occurs at most once in $f_{{n'}_{|_{[n'-2]}}}$. Note that the segment $[[2,1,2]]$ occurs at the end of the second pat of $f_{n'+i}$ and not in the first part. Also, it is easy to see that each segment in $f_{n'+i}$ which contains $m$ occurs just once. Hence, $f_{n'+i}$ is a neighbor locating $m$-coloring. Note that $f_{n'+12}=f_{n'}\oplus [m,m-1,m-2]\oplus T$.
Now step by step we add the part $A$. Let
$$f_{n'+12+1}:=f_{n'}\oplus [m,m-1,m,m-2]\oplus T,~~~f_{n'+12+2}:=f_{n'}\oplus [m,m-1,m-2]\oplus [m,1]\oplus T.$$
Also, for each $2\leq i\leq m-4$, let
\begin{eqnarray*}
f_{n'+12+2i-1}:=f_{n'}\oplus [m,m-1,m,m-2]\oplus [m,i-1,m,i-2,\ldots,m,1] \oplus T,
\end{eqnarray*}
and
\begin{eqnarray*}
f_{n'+12+2i}:=f_{n'}\oplus [m,m-1,m-2]\oplus [m,i,m,i-1,\ldots,m,1]\oplus T.
\end{eqnarray*}

Specially, $f_{n'+12+2(m-4)}=f_{n'}\oplus [m,m-1,m-2]\oplus A\oplus T$.
Let $1\leq j\leq 2(m-4)$. In the coloring $f_{n'+12+j}$ the segment $[[2,1,2]]$ occurs at the end of part $T$ instead of part $f_{n'}$. Note that in $f_{n'+12+j}$, the segment corresponding to the final entry of $f_{n'}$ is $[[2,1,m]]$, not $[[2,1,2]]$. Each possible segment over the set $[m-1]$ occurs at most once and, except $[[2,1,2]]$, each one occurs just in the part $f_{n'}$. Also, by the case by case investigation, we can see that each possible segment containing $m$ occurs at most once. Hence, $f_{n'+12+j}$ is a neighbor locating $m$-coloring.

For adding the parts $D_4$, $D_5$, $\ldots$, $D_{m-3}$ we proceed as follow. Let $4\leq i\leq m-3$ and assume that $D_{i-1}$ is added (note that $D_3=\emptyset$). For adding $D_i$, alternately, we add a new entry $m$, then we remove it in order to add the portion $[m,m-3]$ to the beginning of  $A$, and then we remove this portion in order to add a portion of the form $[m,i,j]$. More precisely, assume that $D_{i,j-1}$ is completed, where $1\leq j\leq i-1$. We want to add the portion $[m,i,j]$ or $[m,j,i]$ of $D_i$. Let $n_i:=n'+12+2(m-4)+3(3+4+\cdots+(i-1-1))$. Note that $n_4=n+12+2(m-4)$ and $D_{i,0}=D_{[3]}=\emptyset$. Let
\begin{eqnarray*}
&f_{n_i+3j-2}:=f_{n'}\oplus [m,m-1,m,m-2]\oplus A\oplus D_{i,j-1}\oplus D_{[i-1]}\oplus T,&\\
&f_{n_i+3j-1}:=f_{n'}\oplus [m,m-1,m-2]\oplus [m,m-3] \oplus A\oplus D_{i,j-1}\oplus D_{[i-1]}\oplus T,&
\end{eqnarray*}
and
\begin{eqnarray*}
f_{n_i+3j}:=\left\{ \begin{array}{ll} f_{n'}\oplus [m,m-1,m-2]\oplus A\oplus [m,i,j]\oplus D_{i,j-1}\oplus D_{[i-1]}\oplus T &  ~~j<i-1 \\ f_{n'}\oplus [m,m-1,m-2]\oplus A\oplus [m,j,i]\oplus D_{i,j-1}\oplus D_{[i-1]}\oplus T & ~~j=i-1. \end{array} \right.
\end{eqnarray*}
Except the segment $[[2,1,2]]$ which occurs at the end of $T$, all of the other possible segments over the set $[m-1]$ occur just in $f_{n'}$. Also, by considering the structures of $A,~D_{i,j-1},~D_{[i-1]}$ and $T$ (and similar to what we said about the complete model) it is not hard to see that each segment containing $m$ occurs at most once in this colorings. Hence, these colorings are neighbor locating $m$-colorings.

Let $n'':=n_{m-3}+3(m-4)$. Since $(n'+2(m-1)+3{m-1\choose2})-n''=6m-12$, we need $6m-12$ steps to complete the proof. Adding $D_{m-2}$ and $D_{m-1}$ is possible but it is complicated and requires more details. Instead, we use the completed model, just we replace $f_{n'}$ with the smaller colorings.
let
\begin{eqnarray*}
f_{n''+j}:=f_{n'-6m+12+j}\oplus[m,m-1,m,m-2,m,m-3]\oplus A\oplus D_{[m-1]}\oplus T,
\end{eqnarray*}
where $1\leq j\leq 6m-12$. Note that since $m\geq6$, $n'-(6m-12)\geq9$.
}\end{proof}

Theorems \ref{xlxl2} and \ref{xl2p} imply the following two corollaries.

\begin{cor}\label{kmpn}
For $m\geq1$ and $n\geq 2$, we have $\Cchi_{{}_{L}}(K_m+P_n)=m+\min\{k~|~k\in \Bbb{N},~n\leq {1\over 2}(k^3-k^2)\}$.
Specially, the locating chromatic number of the fan $F_n$ is $\Cchi_{{}_{L}}(K_1+P_n)$.
\end{cor}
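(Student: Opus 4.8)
\section*{Proof proposal for Corollary \ref{kmpn}}

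The plan is to reduce the statement directly to the two theorems already proved, namely the additivity result Theorem \ref{xlxl2} and the exact determination of $\Cchi_{{}_{L2}}(P_n)$ in Theorem \ref{xl2p}. Since $K_m+P_n$ is a join of two graphs, Theorem \ref{xlxl2} applies with $G_1=K_m$ and $G_2=P_n$, giving
$$\Cchi_{{}_L}(K_m+P_n)=\Cchi_{{}_{L2}}(K_m)+\Cchi_{{}_{L2}}(P_n).$$
So the whole corollary will follow once both summands on the right are identified. Note that this holds for every $m\geq 1$ and $n\geq 2$, so no case distinction on small values is needed.

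First I would evaluate $\Cchi_{{}_{L2}}(K_m)$. The complete graph $K_m$ has diameter one (hence at most two), so Proposition \ref{tasavi} yields $\Cchi_{{}_{L2}}(K_m)=\Cchi_{{}_L}(K_m)$; combined with the fact recalled in the introduction that $\Cchi_{{}_L}(K_m)=m$, this gives $\Cchi_{{}_{L2}}(K_m)=m$. (Indeed this is exactly the equality $\Cchi_{{}_{L2}}(K_n)=n$ stated just after Proposition \ref{tasavi}.) Any proper coloring of $K_m$ must use $m$ distinct colors since all vertices are pairwise adjacent, and such a coloring is trivially a neighbor locating coloring, so the bound is immediate in any case.

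Next I would simply invoke Theorem \ref{xl2p}, which states that for $n\geq 2$,
$$\Cchi_{{}_{L2}}(P_n)=\min\Bigl\{k~\big|~k\in\Bbb{N},~n\leq\tfrac{1}{2}(k^3-k^2)\Bigr\}.$$
Substituting these two values into the additivity identity above produces
$$\Cchi_{{}_L}(K_m+P_n)=m+\min\Bigl\{k~\big|~k\in\Bbb{N},~n\leq\tfrac{1}{2}(k^3-k^2)\Bigr\},$$
which is the claimed formula. Finally, the statement about the fan is a definitional matter: the introduction records that $K_1+P_n$ is the fan $F_n$, so $\Cchi_{{}_L}(F_n)=\Cchi_{{}_L}(K_1+P_n)$ is just the $m=1$ instance.

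There is no genuine obstacle in this corollary: all of the substantive work—establishing additivity of the locating chromatic number under join via the neighbor locating parameter, and constructing optimal neighbor locating colorings of paths through the inductive ``complete model''—has already been carried out in Theorems \ref{xlxl2} and \ref{xl2p}. The corollary is a one-line combination of these, and the only minor point to state carefully is the value $\Cchi_{{}_{L2}}(K_m)=m$, which follows from the diameter-one observation through Proposition \ref{tasavi}.
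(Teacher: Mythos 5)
Your proposal is correct and follows exactly the route the paper intends: the corollary is stated as an immediate consequence of Theorems \ref{xlxl2} and \ref{xl2p}, using $\Cchi_{{}_{L2}}(K_m)=m$ (recorded right after Proposition \ref{tasavi}) together with the formula for $\Cchi_{{}_{L2}}(P_n)$. Nothing is missing.
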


\begin{cor} \label{pnpn}
For two positive integers $m\geq 2$ and $n\geq2$, let $m_{{}_0}:=\min\{k~|~k\in\Bbb{N},~m\leq{1\over2}(k^3-k^2)\}$ and $n_{{}_0}:=\min\{k~|~k\in\Bbb{N},~n\leq{1\over2}(k^3-k^2)\}$. Then,
$\Cchi_{{}_L}(P_m+P_n)=m_{{}_0}+n_{{}_0}$.
\end{cor}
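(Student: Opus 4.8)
The plan is to read off this corollary as a direct composition of the two main tools already established, namely Theorem \ref{xlxl2} and Theorem \ref{xl2p}, with essentially no further combinatorial work required at the level of the corollary itself.

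First I would apply Theorem \ref{xlxl2} with $G_1:=P_m$ and $G_2:=P_n$. Since that theorem holds for arbitrary graphs, it yields immediately
$$\Cchi_{{}_L}(P_m+P_n)=\Cchi_{{}_{L2}}(P_m)+\Cchi_{{}_{L2}}(P_n).$$
This is the decisive reduction: it turns the computation of a locating chromatic number of the join (a global distance invariant) into the computation of two neighbor locating chromatic numbers of the individual path factors, which is exactly the quantity that Theorem \ref{xl2p} evaluates.

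Next I would invoke Theorem \ref{xl2p} separately for each factor. The hypotheses match, since both $m\geq 2$ and $n\geq 2$ fall under the range $n\geq 2$ assumed there. That theorem gives $\Cchi_{{}_{L2}}(P_m)=\min\{k~|~k\in\Bbb{N},~m\leq{1\over2}(k^3-k^2)\}$, which is precisely the quantity denoted $m_{{}_0}$ in the statement, and likewise $\Cchi_{{}_{L2}}(P_n)=n_{{}_0}$. Substituting these two values into the displayed equation gives $\Cchi_{{}_L}(P_m+P_n)=m_{{}_0}+n_{{}_0}$, completing the argument.

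Because the substance of the result lives entirely inside Theorem \ref{xl2p} --- in particular the explicit inductive construction of optimal neighbor locating colorings of paths together with the counting bound $n\leq{1\over2}(k^3-k^2)$ supplied by Proposition \ref{npncn} --- there is no real obstacle remaining once those are granted. The only thing to verify is the purely notational point that the minima defining $m_{{}_0}$ and $n_{{}_0}$ coincide verbatim with the value produced by Theorem \ref{xl2p}, and to keep $G_1$ and $G_2$ in their correct roles so that $m_{{}_0}$ is paired with $P_m$ and $n_{{}_0}$ with $P_n$; the latter is automatic from the symmetry of the join operation.
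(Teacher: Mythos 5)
Your proposal is correct and matches the paper exactly: the paper derives this corollary by the same two-step composition, citing Theorem \ref{xlxl2} to reduce $\Cchi_{{}_L}(P_m+P_n)$ to $\Cchi_{{}_{L2}}(P_m)+\Cchi_{{}_{L2}}(P_n)$ and then Theorem \ref{xl2p} to evaluate each summand as $m_{{}_0}$ and $n_{{}_0}$ respectively. No further comment is needed.
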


Now we determine the neighbor locating chromatic numbers of the cycles. Then using it we determine the exact values of $\Cchi_{{}_L}(P_m+C_n)$, $\Cchi_{{}_L}(K_m+C_n)$, and $\Cchi_{{}_L}(C_m+C_n)$.

For each $n$, $3\leq n<9$, consider the following coloring (sequence) $h_n$ of the cycle $C_n$.
\\$h_3:=[1,2,3],~h_4:=[1,2,3,4],~h_5:=[1,2,1,2,3],~h_6:=[1,2,1,3,2,4],~h_7:=[2,1,3,2,3,2,1],$
\\$h_8:=[3,2,3,1,3,1,2,1,4]$.
\\It is easy to check that each coloring $h_n$ is a neighbor locating coloring. Note that $\Cchi_{{}_L}(C_n)$ is three or four depending on the parity of $n$, and $\Cchi_{{}_L}(C_n)\leq\Cchi_{{}_{L2}}(C_n)$. Therefore, $\Cchi_{{}_L}(C_n)=\Cchi_{{}_{L2}}(C_n)$ for $3\leq n<9$.
For the general case $n\geq9$ we have the following theorem.

\begin{thm} \label{xl2c}
For a positive integer $n\geq 9$, let $n_{{}_0}:=\min\{k~|~k\in \Bbb{N},~n\leq {1\over2}(k^3-k^2)\}$. Then,
\begin{eqnarray*}
\Cchi_{{}_{L2}}(C_n)=\left\{\begin{array}{ll}  n_{{}_0} &~~n\neq {1\over2}(n_{{}_0}^3-n_{{}_0}^2)-1 \\  n_{{}_0}+1 &~~n={1\over2}(n_{{}_0}^3-n_{{}_0}^2)-1.\end{array}\right.
\end{eqnarray*}
\end{thm}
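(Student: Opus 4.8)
The plan is to prove the matching bounds $n_0\le\Cchi_{{}_{L2}}(C_n)\le n_0+1$ and then pin down exactly which length forces the larger value. The lower bound $\Cchi_{{}_{L2}}(C_n)\ge n_0$ is immediate: by Proposition \ref{npncn} any neighbor locating $k$-coloring of $C_n$ satisfies $n\le\frac12(k^3-k^2)$, so $k\ge n_0$. Since $C_n$ is a cycle, the diameter issue is irrelevant here and each vertex $v_i$ has exactly two neighbours, so every segment is a genuine triple $[[c_{i-1},c_i,c_{i+1}]]$; the whole proof is about realizing (or obstructing) the collection of segments. The real content is (i) showing that the length $n=\frac12(n_0^3-n_0^2)-1$ cannot be realized with $n_0$ colors, and (ii) exhibiting optimal colorings in all cases.

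For (i), suppose $f$ were a neighbor locating $k$-coloring of $C_n$ with $n=\frac12(k^3-k^2)-1$. Let $m_s$ be the number of vertices colored $s$. The counting preceding Proposition \ref{npncn} gives $m_s\le(k-1)+{k-1\choose2}={k\choose2}$ for each color $s$, and $\sum_s m_s=n=k\cdot{k\choose2}-1$. Hence exactly one color $s_0$ has $m_{s_0}={k\choose2}-1$ while every other color attains the maximum ${k\choose2}$, i.e. every possible segment centered at a color $s\ne s_0$ occurs. The crux is a local arc-count: read the cyclic color sequence as a closed walk in $K_k$ and let $w_{a,b}$ count the steps traversing the arc $a\to b$. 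If every segment centered at $s$ occurs, then for each color $r\ne s$ the $k-1$ used segments flanked by $r$ — the U-turn $[[r,s,r]]$ together with the $k-2$ segments $[[r,s,y]]$, $y\ne r$ — contribute exactly $k$ to $w_{r,s}+w_{s,r}$: two from the U-turn (entry and exit both $r$) and one apiece from the remaining $k-2$. Applying this with $r=s_0$ to each of the $k-1$ colors $s\ne s_0$ and summing gives $2m_{s_0}=\sum_{s\ne s_0}(w_{s_0,s}+w_{s,s_0})=k(k-1)$, so $m_{s_0}={k\choose2}$, contradicting $m_{s_0}={k\choose2}-1$. Thus $\Cchi_{{}_{L2}}(C_n)\ge n_0+1$ at the exceptional length.

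For (ii) I would use the endpoint data built into Theorem \ref{xl2p}. The key observation is that if a path coloring reads $[2,1,\ldots,2,1]$ at its two ends, then gluing $v_n$ to $v_1$ to form $C_n$ converts the two leaf-segments $[[f(v_2),f(v_1),f(v_2)]]=[[1,2,1]]$ and $[[f(v_{n-1}),f(v_n),f(v_{n-1})]]=[[2,1,2]]$ into the cycle-segments $[[f(v_n),f(v_1),f(v_2)]]=[[1,2,1]]$ and $[[f(v_{n-1}),f(v_n),f(v_1)]]=[[2,1,2]]$ — literally the same two segments — while leaving every interior segment unchanged. Hence closing such a path preserves the neighbor locating property. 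For $n\ge9$ with $n\ne\frac12(n_0^3-n_0^2)-1$, Theorem \ref{xl2p} supplies a neighbor locating $n_0$-coloring $f_n$ of $P_n$ with $f_n(v_1)=2,f_n(v_2)=1$ and $f_n(v_{n-1})=2,f_n(v_n)=1$; closing it yields a neighbor locating $n_0$-coloring of $C_n$, matching the lower bound. At the exceptional length I would instead start from the already-constructed neighbor locating $n_0$-coloring of $C_{n-1}$ (whose length $n-1=\frac12(n_0^3-n_0^2)-2$ is non-exceptional) and splice in a single new vertex colored $n_0+1$ between two adjacent vertices. Every segment created by this insertion contains the fresh color $n_0+1$, so it cannot coincide with any old segment, and the three new segments have pairwise distinct central colors, so they do not coincide with one another; this gives a neighbor locating $(n_0+1)$-coloring of $C_n$, matching the exceptional lower bound.

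I expect the main obstacle to be the bookkeeping in (ii) rather than the idea in (i). One must verify that the precise end-pattern $2,1,\ldots,2,1$ demanded by the closing trick is exactly what Theorem \ref{xl2p} delivers for $n\ge9$, and that the auxiliary length $n-1$ used in the exceptional construction is itself at least $9$, still has locating number $n_0$, and is not itself exceptional — all of which hold once $n_0\ge4$, which is precisely the regime in which an exceptional length can occur for $n\ge9$. The counting identity $w_{r,s}+w_{s,r}=k$ is the conceptual heart of the theorem; the single genuinely delicate point is justifying it rigorously, in particular the orientation accounting that makes the U-turn count twice and each ordinary $r$-flanked segment count once.
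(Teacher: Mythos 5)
Your proposal is correct and follows essentially the same route as the paper: the lower bound from Proposition \ref{npncn}, closing the endpoint-controlled path coloring of Theorem \ref{xl2p} into a cycle for the non-exceptional lengths, inserting one vertex of a fresh color for the exceptional length, and ruling out $n_0$ colors there by double-counting the edges between the deficient color class and the full ones. Your arc-count identity $w_{r,s}+w_{s,r}=k$ is exactly the paper's computation $|X_i|=(n_0-2)+2=n_0$ followed by $|X|=2|V_1|$, just phrased in different notation.
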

\begin{proof}{Suppose that $C_n=v_1v_2\cdots v_nv_1$. By Proposition \ref{npncn}, we have $\Cchi_{{}_{L2}}(C_n)\geq n_{{}_0}$. First assume that $n\neq {1\over2}(n_{{}_0}^3-n_{{}_0}^2)-1$. By Theorem 2, there exists a neighbor locating $n_{{}_0}$-coloring $f_n$ of the path $P_n:=v_1v_2\cdots v_n$ such that $f_n(v_1)=2$, $f_n(v_2)=1$, $f_n(v_{n-1})=2$, and $f_n(v_n)=1$. Consider $f_n$ as a coloring of the vertices of $C_n$. Since $f_n(v_1)\neq f_n(v_n)$, this is a proper coloring of $C_n$. Note that $E(C_n)=E(P_n)\bigcup \{v_nv_1\}$. Hence, for each $1\leq i\leq n$, we have $f_n(N_{C_n}(v_i))=f_n(N_{P_n}(v_i))$. Therefore, $f_n$ is also a neighbor locating $n_{{}_0}$-coloring of $C_n$. This implies that $\Cchi_{{}_{L2}}(C_n)=n_{{}_0}$.

Now assume that $n:={1\over2}(n_{{}_0}^3-n_{{}_0}^2)-1$. By Theorem 2, there exists a neighbor locating $n_{{}_0}$-coloring $f_{n-1}$ of the path $P_{n-1}=v_1v_2\cdots v_{n-1}$ such that $f_{n-1}(v_1)=2$  and $f_{n-1}(v_{n-1})=1$. Define the coloring $f'_n$ of $C_n$ as $f'_n(v_n)=n_{{}_0}+1$ and $f'_n(v_i)=f_{n-1}(v_i)$ for $1\leq i\leq n-1$. Note that $n_{{}_0}+1\in f'_n(N_{C_n}(v_1))\bigcap f'_n(N_{C_n}(v_{n-1}))$, $f'_n(v_1)\neq f'_n(v_{n-1})$, and $f'_n(N_{C_n}(v_i))=f_{n-1}(N_{P_{n-1}}(v_i))$ for each $2\leq i\leq n-2$. Thus, $f'_n$ is a neighbor locating $(n_{{}_0}+1)$-coloring of $C_n$. Hence, $\Cchi_{{}_{L2}}(C_n)\leq n_{{}_0}+1$.

We want to show $\Cchi_{{}_{L2}}(C_n)\neq n_{{}_0}$. Suppose on the contrary there exists a neighbor locating $n_{{}_0}$-coloring $f$ of $C_n$. For each $1\leq i\leq n_{{}_0}$, let $V_i:=\{x~|~x\in V(C_n),~f(x)=i\}$. Since $f$ is a neighbor locating $n_{{}_0}$-coloring, each color class contains at most ${1\over2}(n_{{}_0}^2-n_{{}_0})$ vertices (see the argument before Proposition \ref{npncn}). Now since $n={1\over2}(n_{{}_0}^3-n_{{}_0}^2)-1$, exactly one of the color classes, say $V_1$, has size ${1\over2}(n_{{}_0}^2-n_{{}_0})-1$ and the others have size ${1\over2}(n_{{}_0}^2-n_{{}_0})$. For each $2\leq i\leq n_{{}_0}$, let $X_i:=\{(x,y)~|~x\in N_{C_n}(y),~f(x)=1,~f(y)=i\}$. Let $2\leq i\leq n_{{}_0}$. Since $|V_i|={1\over2}(n_{{}_0}^2-n_{{}_0})$, all of the possible segments of the form $[[r,i,j]]$, where $r\in [n_{{}_0}]$ and $j\in [n_{{}_0}]$, occur in $f$. Thus, for each $j$ with $j\notin\{1,i\}$, there exists $y\in V_i$ such that $f(N_{C_n}(y))=\{1,j\}$. Also, there exists $z\in V_i$ such that $f(N_{C_n}(z))=\{1\}$. This implies that $|X_i|=(n_{{}_0}-2)+2=n_{{}_0}$. Hence, $|X|=(n_{{}_0}-1)n_{{}_0}$, where $X:=X_2\bigcup X_3\bigcup\cdots\bigcup X_{n_{{}_0}}$. Each vertex $x$ with color $1$ has two neighbors and hence, $|X|=2~|\{x~|~x\in V(C_n),~f(x)=1\}|$. This means that there are ${|X|\over2}={(n_{{}_0}-1)n_{{}_0}\over2}$ vertices with color $1$, which is a contradiction.
}\end{proof}

Theorems \ref{xlxl2} and \ref{xl2c} imply the following corollaries.

\begin{cor} \label{pmcn}
For two positive integers $m\geq 2$ and $n\geq 3$, let $m_{{}_0}:=\min\{k~|~k\in \Bbb{N},~m\leq {1\over2}(k^3-k^2)\}$ and $n_{{}_0}:=\min\{k~|~k\in \Bbb{N},~n\leq {1\over2}(k^3-k^2)\}$. Then,
\begin{eqnarray*}
\Cchi_{{}_{L}}(P_m+C_n)=\left\{\begin{array}{ll} m_{{}_0}+\Cchi_{{}_L}(C_n) &~~3\leq n<9 \\ m_{{}_0}+n_{{}_0} &~~n\geq9,~n\neq {1\over2}(n_{{}_0}^3-n_{{}_0}^2)-1\\ m_{{}_0}+n_{{}_0}+1 &~~n\geq9,~n={1\over2}(n_{{}_0}^3-n_{{}_0}^2)-1.\end{array}\right.
\end{eqnarray*}
\end{cor}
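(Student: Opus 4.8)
The plan is to apply the join formula of Theorem \ref{xlxl2} directly, reducing the problem to evaluating the neighbor locating chromatic numbers of the two factors $P_m$ and $C_n$, both of which are already supplied by earlier results in the paper.

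First I would invoke Theorem \ref{xlxl2} to write
$$\Cchi_{{}_L}(P_m+C_n)=\Cchi_{{}_{L2}}(P_m)+\Cchi_{{}_{L2}}(C_n).$$
The first summand is immediate: by Theorem \ref{xl2p} we have $\Cchi_{{}_{L2}}(P_m)=m_{{}_0}$, since $m_{{}_0}$ is precisely the quantity defined there. Thus the entire content of the corollary reduces to substituting the correct value of $\Cchi_{{}_{L2}}(C_n)$.

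For the cycle term I would split on the size of $n$, mirroring the case distinction in the statement. When $n\geq 9$ the value of $\Cchi_{{}_{L2}}(C_n)$ is supplied verbatim by Theorem \ref{xl2c}: it equals $n_{{}_0}$ unless $n={1\over2}(n_{{}_0}^3-n_{{}_0}^2)-1$, in which case it equals $n_{{}_0}+1$. Feeding these two sub-cases into the displayed identity yields the second and third lines of the corollary. When $3\leq n<9$, Theorem \ref{xl2c} does not apply, so here I would use the remark recorded just before that theorem, namely that the explicit sequences $h_3,\ldots,h_8$ are neighbor locating colorings and that, together with the general inequality $\Cchi_{{}_L}(C_n)\leq\Cchi_{{}_{L2}}(C_n)$, this forces $\Cchi_{{}_{L2}}(C_n)=\Cchi_{{}_L}(C_n)$ throughout that range. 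Substituting this into the identity produces the first line.

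Since every ingredient has already been established, there is no substantial obstacle; the only point demanding a little care is that the formula of Theorem \ref{xl2c} is stated only for $n\geq 9$, so the small cycles must be handled through the separately verified equality $\Cchi_{{}_{L2}}(C_n)=\Cchi_{{}_L}(C_n)$ rather than through the cubic estimate. Assembling the three sub-cases then reproduces the claimed piecewise formula, completing the argument.
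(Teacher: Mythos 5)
Your proposal is correct and follows exactly the paper's intended derivation: the paper itself states that Corollary \ref{pmcn} follows from Theorems \ref{xlxl2} and \ref{xl2c} (with $\Cchi_{{}_{L2}}(P_m)=m_{{}_0}$ from Theorem \ref{xl2p}), and it handles the small cycles $3\leq n<9$ precisely via the explicit colorings $h_3,\ldots,h_8$ together with the inequality $\Cchi_{{}_L}(C_n)\leq\Cchi_{{}_{L2}}(C_n)$, just as you do.
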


\begin{cor} \label{kmcn}
For two positive integers $m\geq 1$ and $n\geq 3$, let $n_{{}_0}:=\min\{k~|~k\in \Bbb{N},~n\leq {1\over 2}(k^3-k^2)\}$. Then,
\begin{eqnarray*}
\Cchi_{{}_{L}}(K_m+C_n)=\left\{\begin{array}{ll} m+\Cchi_{{}_L}(C_n) &~~3\leq n<9\\  m+n_{{}_0} &~~n\geq9,~n\neq {1\over2}(n_{{}_0}^3-n_{{}_0}^2)-1\\ m+n_{{}_0}+1 &~~n\geq9,~n={1\over2}(n_{{}_0}^3-n_{{}_0}^2)-1.\end{array}\right.
\end{eqnarray*}
Specially, the locating chromatic number of the wheel $W_n$ is $\Cchi_{{}_L}(K_1+C_n)$.
\end{cor}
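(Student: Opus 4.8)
The plan is to read the statement off directly from Theorem~\ref{xlxl2} combined with the cycle computation of Theorem~\ref{xl2c}. First I would apply Theorem~\ref{xlxl2} with $G_1=K_m$ and $G_2=C_n$, giving
$$\Cchi_{{}_L}(K_m+C_n)=\Cchi_{{}_{L2}}(K_m)+\Cchi_{{}_{L2}}(C_n).$$
Because $K_m$ is complete, every proper coloring assigns $m$ distinct colors to its $m$ vertices, so $\Cchi_{{}_{L2}}(K_m)=m$, exactly as noted after Proposition~\ref{tasavi}. Hence the entire computation reduces to evaluating $\Cchi_{{}_{L2}}(C_n)$ and adding $m$.

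Next I would split on the size of $n$ so as to mirror the structure of the claimed formula. For $3\le n<9$, the explicit colorings $h_n$ displayed just before Theorem~\ref{xl2c} show that $\Cchi_{{}_{L2}}(C_n)=\Cchi_{{}_L}(C_n)$; substituting this into the displayed sum yields the first line. For $n\ge 9$, Theorem~\ref{xl2c} gives $\Cchi_{{}_{L2}}(C_n)$ verbatim: it equals $n_{{}_0}$ unless $n=\frac{1}{2}(n_{{}_0}^3-n_{{}_0}^2)-1$, in which case it equals $n_{{}_0}+1$. Plugging these two values into $m+\Cchi_{{}_{L2}}(C_n)$ produces the remaining two lines.

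The wheel assertion is then immediate, since $W_n=K_1+C_n$ by definition; taking $m=1$ (and using $\Cchi_{{}_{L2}}(K_1)=1$) gives $\Cchi_{{}_L}(W_n)=\Cchi_{{}_L}(K_1+C_n)$.

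I do not expect any real obstacle: each step is a substitution into an already-established identity. The only thing requiring care is bookkeeping, namely checking that the case split of the corollary ($3\le n<9$, then $n\ge 9$ with the exceptional value $n=\frac{1}{2}(n_{{}_0}^3-n_{{}_0}^2)-1$) coincides with that of Theorem~\ref{xl2c} and that the quantity $n_{{}_0}$ is defined identically in both places. Both verifications are routine.
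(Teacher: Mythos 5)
Your proposal is correct and follows exactly the route the paper intends: the corollary is stated as an immediate consequence of Theorem~\ref{xlxl2} (with $\Cchi_{{}_{L2}}(K_m)=m$), the explicit colorings $h_n$ giving $\Cchi_{{}_{L2}}(C_n)=\Cchi_{{}_L}(C_n)$ for $3\le n<9$, and Theorem~\ref{xl2c} for $n\ge 9$. Nothing is missing.
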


\begin{cor}\label{cmcn}
For positive integers $m$ and $n$, $3\leq m\leq n$, let $m_{{}_0}:=\min\{k~|~k\in \Bbb{N},~m\leq {1\over2}(k^3-k^2)\}$ and $n_{{}_0}:=\min\{k~|~k\in \Bbb{N},~n\leq {1\over2}(k^3-k^2)\}$. Then,
\begin{eqnarray*}
\Cchi_{{}_{L}}(C_m+C_n)=\left\{\begin{array}{ll}
\Cchi_{{}_L}(C_m)+\Cchi_{{}_L}(C_n) &~~n<9\\
\Cchi_{{}_L}(C_m)+n_{{}_0} &~~m<9\leq n,~n\neq {1\over2}(n_{{}_0}^3-n_{{}_0}^2)-1\\
\Cchi_{{}_L}(C_m)+n_{{}_0}+1 &~~m<9\leq n,~n={1\over2}(n_{{}_0}^3-n_{{}_0}^2)-1\\
m_{{}_0}+n_{{}_0} &~~m\geq9,~m\neq {1\over2}(m_{{}_0}^3-m_{{}_0}^2)-1,~n\neq {1\over2}(n_{{}_0}^3-n_{{}_0}^2)-1\\
m_{{}_0}+n_{{}_0}+1 &~~m\geq9,~m={1\over2}(m_{{}_0}^3-m_{{}_0}^2)-1,~n\neq {1\over2}(n_{{}_0}^3-n_{{}_0}^2)-1\\
m_{{}_0}+n_{{}_0}+1 &~~m\geq9,~m\neq {1\over2}(m_{{}_0}^3-m_{{}_0}^2)-1,~n={1\over2}(n_{{}_0}^3-n_{{}_0}^2)-1\\
m_{{}_0}+n_{{}_0}+2 &~~m\geq9,~m={1\over2}(m_{{}_0}^3-m_{{}_0}^2)-1,~n={1\over2}(n_{{}_0}^3-n_{{}_0}^2)-1.
\end{array}\right.
\end{eqnarray*}
\end{cor}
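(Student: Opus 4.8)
The plan is to reduce the entire statement to the join formula of Theorem~\ref{xlxl2} together with the already-established values of the neighbor locating chromatic number of a cycle. Since the diameter of $C_m+C_n$ is at most two, Theorem~\ref{xlxl2} gives
$\Cchi_{{}_L}(C_m+C_n)=\Cchi_{{}_{L2}}(C_m)+\Cchi_{{}_{L2}}(C_n)$,
so the whole computation amounts to substituting the correct value of $\Cchi_{{}_{L2}}$ for each factor and simplifying. The two ingredients I would quote are: for $3\leq j<9$, the remark preceding Theorem~\ref{xl2c} gives $\Cchi_{{}_{L2}}(C_j)=\Cchi_{{}_L}(C_j)$; and for $j\geq9$, Theorem~\ref{xl2c} gives $\Cchi_{{}_{L2}}(C_j)=j_{{}_0}$ unless $j=\frac{1}{2}(j_{{}_0}^3-j_{{}_0}^2)-1$, in which case it equals $j_{{}_0}+1$, where $j_{{}_0}:=\min\{k\in\Bbb{N}:j\leq\frac{1}{2}(k^3-k^2)\}$.

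The case split in the statement is then exactly the case split forced by the hypothesis $m\leq n$. First I would observe that, because $m\leq n$, the condition $m\geq9$ automatically forces $n\geq9$; hence the only regimes to consider are $n<9$ (whence also $m<9$), $m<9\leq n$, and $9\leq m\leq n$. In the first regime both factors are small cycles, so substituting $\Cchi_{{}_{L2}}(C_m)=\Cchi_{{}_L}(C_m)$ and $\Cchi_{{}_{L2}}(C_n)=\Cchi_{{}_L}(C_n)$ yields the first line. In the second regime $C_m$ is small and contributes $\Cchi_{{}_L}(C_m)$, while $C_n$ is large and contributes $n_{{}_0}$ or $n_{{}_0}+1$ according to Theorem~\ref{xl2c}; this produces the second and third lines.

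In the last regime both $C_m$ and $C_n$ are large, so each contributes its value from Theorem~\ref{xl2c}. Writing $\varepsilon_m:=1$ if $m=\frac{1}{2}(m_{{}_0}^3-m_{{}_0}^2)-1$ and $\varepsilon_m:=0$ otherwise, and similarly $\varepsilon_n$, one has $\Cchi_{{}_{L2}}(C_m)=m_{{}_0}+\varepsilon_m$ and $\Cchi_{{}_{L2}}(C_n)=n_{{}_0}+\varepsilon_n$, so the sum equals $m_{{}_0}+n_{{}_0}+\varepsilon_m+\varepsilon_n$. The four combinations $(\varepsilon_m,\varepsilon_n)\in\{0,1\}^2$ give precisely the last four lines of the displayed formula. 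There is no genuine obstacle beyond this bookkeeping; the only points deserving a word of justification are the implication $m\geq9\Rightarrow n\geq9$ that legitimizes applying Theorem~\ref{xl2c} to both factors at once, and the fact that for small cycles one must retain the value $\Cchi_{{}_L}(C_j)$ (which is $3$ or $4$ according to the parity of $j$) rather than $j_{{}_0}$, since the two need not coincide; for instance $\Cchi_{{}_L}(C_4)=4$ whereas $4_{{}_0}=3$, which is exactly why the first three lines are phrased in terms of $\Cchi_{{}_L}(C_m)$.
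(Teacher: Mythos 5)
Your proposal is correct and is exactly the derivation the paper intends: the paper states this corollary as an immediate consequence of Theorem \ref{xlxl2} and Theorem \ref{xl2c} (together with the explicit small-cycle colorings $h_n$ showing $\Cchi_{{}_{L2}}(C_j)=\Cchi_{{}_L}(C_j)$ for $3\leq j<9$), and your case bookkeeping, including the observation that $m\leq n$ forces $n\geq 9$ whenever $m\geq 9$, fills in precisely the routine details the paper omits.
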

\begin{remark}
Note that the diameter of a complete multipartite graph is two and its locating chromatic number is equal to the number of its vertices. Hence, two corollaries \ref{kmpn} and \ref{kmcn} hold also for complete multipartite graphs (such as stars) instead of complete graphs.
\end{remark}

\end{document}